\DeclareMathOperator{\rk}{rk}
\newtheorem*{conjecture}{Conjecture}
\newtheorem*{proposition}{Proposition}
\theoremstyle{remark}
\newtheorem*{remark}{Remark}
\title{Derived category of moduli of parabolic bundles on $\mathbb{P}^1$}
\author{Anton Fonarev}
\dedicatory{Dedicated to the memory of M.\,S.\,Narasimhan.}
\address{\sloppy
\parbox{0.95\textwidth}{
Algebraic Geometry Section, Steklov Mathematical Institute of Russian Academy of Sciences,
8 Gubkin str., Moscow 119991 Russia\\[.5em]
National Research University Higher School of Economics
6 Usacheva str., Moscow 119048 Russia
\hfill
}\bigskip}
\email{avfonarev@mi-ras.ru}
\date{}
\thanks{This work was supported by the Russian Science Foundation under grant no.~21-11-00153, https://rscf.ru/en/project/21-11-00153/}
\begin{document}

\begin{abstract}
    We propose a conjecture on the structure of the bounded
    derived category of coherent sheaves of the moduli
    space rank $2$ parabolic bundles on $\mathbb{P}^1$.
\end{abstract}

\maketitle

\section{Moduli of bundles on curves}
For simplicity, we work over the field of complex numbers.
Let $C$ be a smooth projective curve of genus $g \geq 1$. In~\cite{FK}
it was shown that for a general $C$
its bounded derived category of coherent sheaves $D(C)$
embeds in the derived category $D(M)$,
where $M$ is the moduli space of stable rank $2$ bundles on~$C$
with fixed determinant of odd degree.
This result was independently obtained by Narasimhan in~\cite{Na} for any curve
of genus $g\geq 4$ without the generality assumption. These were the first
steps towards a~conjecture, which is commonly attributed to Narasimhan and, independently, to Belmans, Galkin, and Mukhopadhyay.
After plenty of work done by various groups of authors,
this conjecture was seemingly proved in the preprint~\cite{Te}.
Namely, it was shown that there is a semiorthogonal decomposition
\begin{equation}\label{eq:dbcurves}
    D(M) = \langle \mathcal{O}, \mathcal{O}(1),\ D(C), D(C)(1),\ \ldots,
    D(S^{g-2}C), D(S^{g-2}C)(1),\ D(S^{g-1}C) \rangle,
\end{equation}
where $S^iC$ denotes the $i$-th symmetric power of the curve $C$.

A key ingredient in~\cite{FK} was an explicit geometric description of
$M$ for hyperelliptic curves. Consider a~hyperelliptic curve $C$.
Pick a coordiate on $\mathbb{P}^1$ so that the branching points
$p_i=(1:a_i)$, $i=1,\ldots,2g+2$ of the hyperelliptic projection are not
at infinity.
With the curve $C$ one associates a net of quadrics generated by
\begin{equation}\label{eq:net}
    q_0=a_1x_1^2+a_2x_2^2+\cdots+a_{d}x_{d}^2, \quad
    q_\infty = -(x_1^2+x_2^2+\cdots+x_{d}^2),
\end{equation}
where $d=2g+2$, and $x_i$ for $i=1,\ldots,d$ are coordinates on a fixed 
$d$-dimensional vector space $V$.
A~celebrated theorem of Desale and Ramanan states that
$M$ is isomorphic to the space of
$(g-1)$-dimensional subspaces in~$V$ isotropic with respect to $q_0$ and $q_\infty$
(see~\cite{DR}). Equivalently, this is the variety of projective subspaces
$\mathbb{P}^{g-2}$ lying on the intersection of two even-dimensional quadrics
$Q_0$ and $Q_\infty$ defined by $q_0$ and $q_\infty$, respectively.
It seems natural to ask what happens when the the quadrics are odd-dimensional.

\section{Moduli of parabolic bundles on \texorpdfstring{$\mathbb{P}^1$}{P1}}
Let $V$ be a vector space of dimension $d=2g+1$ for some $g>1$.
Consider the~net of quadrics~\eqref{eq:net},
for distinct
$a_1,\ldots,a_{2g+1}$.
It turns out that the variety of subspaces of~dimension $g-1$ in $V$
isotropic with respect to $q_0$ and $q_\infty$
is isomorphic to the moduli space of stable quasiparabolic bundles on rank $2$
and dergree $0$ on $\mathbb{P}^1$ with weights $\frac{1}{2}$ at the marked points
$p_i=(1:a_i)$ (see~\cite{Ca}).
Meanwhile, this moduli space is isomorphic ot the moduli space $\mathcal{M}$ of rank $2$
bundles on stacky $\mathbb{P}^1$ with $\mathbb{Z}/2\mathbb{Z}$
structure at the points $p_i$ (see~\cite{Bi}).
Denote the latter stack by $\mathcal{C}$.
The following conjecture generalises decomposition~\eqref{eq:dbcurves}.

\begin{conjecture}
    There is a semiorthogonal decomposition
    \begin{equation}\label{eq:dbstack}
        D(\mathcal{M}) = \langle \mathcal{O}, D(\mathcal{C}), D(\widetilde{S^2\mathcal{C}}),
        \ldots, D(\widetilde{S^{g-1}\mathcal{C}}) \rangle,
    \end{equation}
    where $\widetilde{S^k\mathcal{C}}$
    denotes the root stack obtained from
    $\mathbb{P}^k$ by extracting the square root from $2g+1$ hyperplanes
    in general position.
\end{conjecture}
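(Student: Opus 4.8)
\emph{Overview and geometric model.} The plan is to transport each ingredient of the proof of~\eqref{eq:dbcurves} to the parabolic setting (equivalently, to the odd-dimensional pencil of quadrics, or to the $\ZZ/2\ZZ$-stacky curve $\CC$) and to establish generation by a variation-of-GIT argument on moduli of parabolic Bradlow pairs. First I would fix the geometric model: $\CM$ is the zero locus on $\Gr(g-1,V)$ of the regular section of $\mathrm{Sym}^2\CU^\vee\oplus\mathrm{Sym}^2\CU^\vee$ cut out by $q_0$ and $q_\infty$ (equivalently, $\CM=\OGr(g-1,V,q_0)\cap\OGr(g-1,V,q_\infty)$ inside $\Gr(g-1,V)$); for $a_1,\dots,a_{2g+1}$ distinct --- the condition under which the corresponding parabolic bundles are stable --- this section is regular, so $\CM$ is smooth of dimension $2g-2=\dim\Gr(g-1,V)-2\binom{g}{2}$. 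On the pencil $\PP^1$ spanned by $q_0$ and $q_\infty$ there sits the quadric fibration $\CQ\to\PP^1$ together with its sheaf of even Clifford algebras $\CB_0$; since $d=2g+1$ is odd, $\CB_0$ degenerates precisely over the $2g+1$ points $p_i$, and a Morita-type argument identifies the twisted derived category $D(\PP^1,\CB_0)$ with $D(\CC)$ --- the odd-dimensional counterpart of Kuznetsov's analysis of quadric fibrations, the degeneration points producing exactly the $\ZZ/2\ZZ$-stabilisers of $\CC$. The relative almost-maximal isotropic Grassmannian of $\CQ/\PP^1$ carries a spinor bundle, which is a $\CB_0$-module, and, as in~\cite{FK}, restricting the associated integral kernel to $\CM$ gives the functor $D(\CC)\to D(\CM)$ realising the $k=1$ block of~\eqref{eq:dbstack}; conceptually this is (relative) homological projective duality for the pencil of quadrics.

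\emph{The remaining blocks and semiorthogonality.} For $k\geq 2$ I would build the block $D(\widetilde{S^k\CC})\to D(\CM)$ from a Hecke-type correspondence $\CH_k\subset\CM\times\widetilde{S^k\CC}$, using the universal rank-$2$ parabolic bundle $\mathbb{E}$ on $\CM\times\CC$ and its parabolic Hecke modifications along degree-$k$ effective divisors. A preliminary lemma must identify the stack of degree-$k$ effective divisors on $\CC$ with the root stack over $\PP^k=S^k\PP^1$ obtained by extracting square roots from the $2g+1$ hyperplanes $H_i=\{D:p_i\in\supp D\}$, i.e.\ with $\widetilde{S^k\CC}$ as in the statement. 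Full faithfulness of the resulting functor $\Phi_k$ follows from the Bondal--Orlov criterion: the $\Ext^\bullet$-groups between images of skyscrapers reduce to cohomology of parabolic Hecke loci, handled as in the $k=1$ computations of~\cite{FK,Na} and their successors. Semiorthogonality --- vanishing of $\Hom^\bullet$ from the $k$-th block to the $j$-th for $j<k$, with the fully left-orthogonal block $\langle\mathcal{O}\rangle$ first (the parabolic shadow of $\langle\mathcal{O},\mathcal{O}(1)\rangle$ in~\eqref{eq:dbcurves}) --- I would deduce from Serre duality on $\CM$ together with dimension bounds for the supports of the composed integral kernels, whose codimension grows with $k-j$.

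\emph{Generation --- the main obstacle.} The hard step is fullness, i.e.\ that the $g$ blocks above span $D(\CM)$. I would argue by wall-crossing and windows, in the spirit of Thaddeus and of the proof of~\eqref{eq:dbcurves} in~\cite{Te}. Consider the moduli spaces $\CM_\tau$ of $\tau$-stable parabolic Bradlow pairs $(E, s\colon\mathcal{O}(-m)\to E)$ on $\CC$: in the extreme chamber one obtains a projective space (or a small Grassmannian), hence a full exceptional collection, and in the opposite chamber a projective bundle over $\CM$. Each wall $\tau_1<\dots<\tau_N$ is a flip whose centre is a projective bundle over some $\widetilde{S^k\CC}$, so by Orlov's blow-up and flip formulas $D(\CM_\tau)$ changes across the wall by inserting or removing a copy of $D(\widetilde{S^k\CC})$; collecting the contributions over all walls and undoing the projective bundle at the end yields~\eqref{eq:dbstack} after a bounded sequence of mutations. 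The genuine difficulties I anticipate are (i) the \emph{local} structure of the walls --- smoothness of all the $\CM_\tau$, identification of the flip loci with root stacks over symmetric powers, and the ranks of the normal sub-bundles of the flipped loci --- all made subtler by the weights $\tfrac12$ and the attendant $\ZZ/2\ZZ$ root-stack phenomena; and (ii) the final bookkeeping of twists and mutations needed to bring the assembled collection into the clean form of~\eqref{eq:dbstack}. A degeneration of a genus-$g$ hyperelliptic curve with one branch point sent to infinity, so as to deduce~\eqref{eq:dbstack} from~\eqref{eq:dbcurves} by a nearby-cycles argument, is worth bearing in mind as an alternative, but controlling the derived category through such a degeneration looks at least as delicate as the direct route.
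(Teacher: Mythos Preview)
The statement you are attempting to prove is labelled a \textbf{Conjecture} in the paper, and the paper does \emph{not} prove it. The only evidence the author offers is the Proposition that follows, which checks that the ranks of the Grothendieck groups of the two sides of~\eqref{eq:dbstack} agree (both equal $g\cdot 4^{g-1}$). So there is no ``paper's own proof'' to compare your proposal against: you have written a research programme for an open problem.

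As a strategy, what you outline is coherent and broadly in the spirit of how~\eqref{eq:dbcurves} was eventually established, but you should be clear that none of the key steps has been carried out in this parabolic/odd-dimensional setting. In particular: the identification of $D(\PP^1,\CB_0)$ with $D(\CC)$ for the odd-dimensional pencil, the identification of the stack of degree-$k$ effective divisors on $\CC$ with $\widetilde{S^k\CC}$, the full faithfulness of your Hecke functors $\Phi_k$, and above all the wall-crossing analysis for parabolic Bradlow pairs with weights $\tfrac12$ --- each of these is a genuine piece of work, not a citation. Your own list of ``genuine difficulties'' (i) and (ii) is honest, but it understates the situation: you are not filling gaps in an existing argument, you are proposing the argument from scratch. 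Until those steps are actually executed, what you have is a plausible plan of attack on an open conjecture, not a proof.
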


Let us say a few words about the stacks $\widetilde{S^k\mathcal{C}}$ appearing
in the decomposition~\eqref{eq:dbstack}. Let $p_i^*$ denote
the~projective dual to $p_i$ in $(\mathbb{P}^1)^*\simeq \mathbb{P}^1$.
Let $h_i^*$ denote the image of $p^*_i$ under the degree $k$ Veronese
embedding $(\mathbb{P}^1)^*\hookrightarrow \mathbb{P}^k$, where we treat
the latter space as $(\mathbb{P}^k)^*$. Finally, let $H_i$ denote the
hyperplane in $\mathbb{P}^k$ dual to $h_i^*$. It is well known that the
points $h_1^*, h_2^*,\ldots, h_d^*$ are in general position, so the same holds for
the hyperplanes $H_1, H_2, \ldots, H_d$; that is, for any subset
$I\subset \{1, 2, \ldots d\}$ the intersection $\cap_{i\in I}H_i = \mathbb{P}^{k-|I|}$,
where if $|I|> k$, then the latter is empty. In particular, the collection
of effective Catrier divisors $H_1, H_2, \ldots, H_d$ forms
a \emph{generalized snc divisor:} for any subset $I\subset \{1, 2, \ldots d\}$
and $j\in I$ the inclusion $\cap_{i\in I}H_i \to \cap_{i\in I\setminus \{j\}}H_i\cap H_i$
is an effective Cartier divisor. In such a situation one can consider the
fibre product of the root stacks $\mathbb{P}^k_{2^{-1}H_i}$ over $\mathbb{P}^k$,
where $\mathbb{P}^k_{2^{-1}H_i}$ denotes the usual square root stack construction
associated with the Cartier divisor $H_i$. This fibre product
is precisely our stack $\widetilde{S^k\mathcal{C}}$.

\begin{remark}
Form~\cite[Theorem~4.9]{BLS} it follows that the derived category $D(\widetilde{S^k\mathcal{C}})$
carries a full exceptional collection. Thus, the same should be true for $D(\mathcal{M})$.
\end{remark}

\section{Computing the rank of \texorpdfstring{$K_0(\mathcal{M})$}{K0M}}
As evidece in support of our conjecture, we compute the~ranks of the Grothendieck
groups of the left- and right-hand side of~\eqref{eq:dbstack},
which we denote by $r_g$ and $l_g$, respectively.
Not only we show that they are equal, but give a very simple closed formula
for both.

\begin{proposition}
    There are equalities $l_g=r_g=g\cdot4^{g-1}$.
\end{proposition}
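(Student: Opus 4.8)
The plan is to compute $l_g$ and $r_g$ separately and to recognise in each the value $g\cdot 4^{g-1}$, most conveniently by showing that both satisfy the recursion $a_g=4a_{g-1}+4^{g-1}$ with $a_1=1$ (which forces $a_g=g\cdot 4^{g-1}$).

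\emph{The right-hand side.} The stack $\widetilde{S^k\mathcal C}$ is the iterated square-root stack of $\mathbb P^k$ along the generalized snc divisor $H_1+\dots+H_d$, $d=2g+1$, described in the text. By the root-stack formalism underlying \cite[Theorem~4.9]{BLS}, $D(\widetilde{S^k\mathcal C})$ admits a semiorthogonal decomposition whose components are copies of $D\!\left(\bigcap_{i\in I}H_i\right)$ indexed by the subsets $I\subseteq\{1,\dots,d\}$ (one copy for each $I$, since every root has order $2$). As $\bigcap_{i\in I}H_i\simeq\mathbb P^{k-|I|}$ and is empty for $|I|>k$, we get $\rk K_0(\widetilde{S^k\mathcal C})=\sum_{j=0}^{k}\binom{2g+1}{j}(k+1-j)$; its $k=0$ value is $1$, matching the summand $\mathcal O=D(\widetilde{S^0\mathcal C})$ of \eqref{eq:dbstack}. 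Summing the contributions of $\mathcal O$ and of $D(\widetilde{S^k\mathcal C})$ for $k=1,\dots,g-1$ and interchanging the two summations,
\[
  l_g=\sum_{k=0}^{g-1}\sum_{j=0}^{k}\binom{2g+1}{j}(k+1-j)=\sum_{j=0}^{g-1}\binom{2g+1}{j}\binom{g-j+1}{2}.
\]
Now apply Pascal twice, $\binom{2g+1}{j}=\binom{2g-1}{j}+2\binom{2g-1}{j-1}+\binom{2g-1}{j-2}$, and reindex the three resulting sums: the middle one is exactly $2\sum_{i=0}^{g-2}\binom{2g-1}{i}\binom{g-i}{2}=2l_{g-1}$, while in the other two the identities $\binom{g-j+1}{2}=\binom{g-j}{2}+(g-j)$ and $\binom{g-1-i}{2}=\binom{g-i}{2}-(g-1-i)$ peel off one further copy of $l_{g-1}$ each (the out-of-range terms vanish since $\binom12=\binom02=0$), leaving the summable tails $\sum_{j=0}^{g-1}\binom{2g-1}{j}=4^{g-1}$ and $\sum_{j=0}^{g-1}j\binom{2g-1}{j}$. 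Evaluating the latter via $j\binom{2g-1}{j}=(2g-1)\binom{2g-2}{j-1}$ and symmetry of a binomial row produces a central coefficient $\binom{2g-2}{g-1}$ with opposite signs in the two tails, so these cancel and one is left with $l_g=4l_{g-1}+4^{g-1}$. With $l_1=1$ (equivalently $l_2=\binom50\binom32+\binom51\binom22=8$), this gives $l_g=g\cdot4^{g-1}$.

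\emph{The left-hand side.} By the Desale--Ramanan description and its parabolic counterpart recalled above (\cite{DR},\cite{Ca},\cite{Bi}), $\mathcal M$ is the smooth projective variety $\{\,W\in\Gr(g-1,2g+1):q_0|_W=q_\infty|_W=0\,\}$ of subspaces isotropic for the pencil \eqref{eq:net}; for generic $a_i$ it is the zero locus of the tautological section of $E=(\mathrm{Sym}^2\mathcal S^\vee)^{\oplus2}$ on $\Gr(g-1,2g+1)$, where $\mathcal S$ is the tautological subbundle, with $\dim\mathcal M=2g-2$ the expected value. Being a moduli space of parabolic bundles on $\mathbb P^1$, $\mathcal M$ has only even cohomology and rational Chow groups agreeing with cohomology (it admits an affine paving), so $r_g=\rk K_0(\mathcal M)=\chi_{\mathrm{top}}(\mathcal M)=\#\mathcal M(\mathbb F_q)\big|_{q=1}$. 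I would compute this in one of two equivalent ways: (i) as the intersection number $\int_{\Gr(g-1,2g+1)}c_{g(g-1)}(E)\,c(T_{\Gr})\big/c(E)$, using the Koszul resolution of $\mathcal O_{\mathcal M}$ and the sequence $0\to T_{\mathcal M}\to T_{\Gr}|_{\mathcal M}\to E|_{\mathcal M}\to0$ after checking regularity of the section for generic $a_i$; or (ii) via the Harder--Narasimhan/Atiyah--Bott--Desale--Ramanan sieve for rank $2$ quasiparabolic bundles of degree $0$ on $\mathbb P^1$, subtracting from the motive of the full stack the unstable strata indexed by a splitting type $\mathcal O(m)\oplus\mathcal O(-m)$ together with a choice of which of the $2g+1$ marked lines lie on the destabilising sub-line-bundle (only finitely many $m$ contribute at weight $\tfrac12$). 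Either route expresses $r_g$ as an explicit function of $g$; I would then verify that it satisfies the same recursion $r_g=4r_{g-1}+4^{g-1}$, $r_2=8$, and conclude $r_g=g\cdot4^{g-1}=l_g$.

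\emph{Main obstacle.} The evaluation of $l_g$ is elementary once the root-stack $K_0$ formula is in hand. The real work is $r_g$: one must pin down the weight-$\tfrac12$ parabolic moduli problem precisely (which splitting types $\mathcal O(m)\oplus\mathcal O(-m)$ carry stable quasiparabolic structures, and with how many marked lines on the destabilising sub-bundle), run the Harder--Narasimhan sieve over $\mathbb P^1$, and carry out the $q\to1$ specialisation---where many individual terms are singular---to extract a closed form and match it with the recursion above. If instead one uses the Schubert-calculus integral, the obstacle becomes that computation on $\Gr(g-1,2g+1)$ together with the regularity of the tautological section.
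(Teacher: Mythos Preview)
Your treatment of the root–stack side is complete and correct: the formula
\(\sum_{j=0}^{g-1}\binom{2g+1}{j}\binom{g-j+1}{2}\) is exactly what the paper obtains, and your Pascal/recursion evaluation \(a_g=4a_{g-1}+4^{g-1}\) is a valid alternative to the paper's one–line variance argument (rewrite the sum as \(\tfrac12\sum_{t=0}^{2g}(g-t)^2\binom{2g}{t}\) and recognise \(\mathrm{Var}(B(2g,\tfrac12))=g/2\)). So on that half you differ only in style.

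The genuine gap is on the \(\mathcal M\) side. You do not actually compute \(\rk K_0(\mathcal M)\); you outline two programmes (a Schubert integral of \(c_{g(g-1)}(E)\,c(T_{\Gr})/c(E)\), or a Harder--Narasimhan sieve with a \(q\to1\) specialisation) and then defer the work to a ``main obstacle'' paragraph. Neither is carried to a formula, and both have nontrivial hazards you yourself flag: regularity of the tautological section and the singular \(q\to1\) limit. As written this is a plan, not a proof.

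The paper sidesteps all of this by quoting Bauer's wall–crossing description: there is a chain \(\mathcal M_0=\mathbb P^{2g-2},\ \mathcal M_1,\ \ldots,\ \mathcal M_{g-1}\simeq\mathcal M\) where each step blows up \(n_i=\binom{2g+1}{i+1}+\binom{2g+1}{i-1}+\cdots\) copies of \(\mathbb P^{i}\) and blows down the exceptional divisors to \(\mathbb P^{2g-3-i}\)'s. Orlov's blow-up formula then gives \(\rk K_0(\mathcal M_{i+1})-\rk K_0(\mathcal M_i)=n_i(2g-3-2i)\), and collecting the coefficient of \(\binom{2g+1}{t}\) in \(\sum_i n_i(2g-3-2i)\) one lands on the \emph{same} sum \(\sum_{t}\binom{g+1-t}{2}\binom{2g+1}{t}\). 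Thus \(l_g=r_g\) before either is evaluated, and no Schubert calculus or stratification sieve is needed. If you want to keep your overall structure, replacing your \(\mathcal M\)-side sketch by this anti-flip bookkeeping would close the gap cleanly.
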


\begin{proof}

The derived category of $\widetilde{S^k\mathcal{C}}$
has a semiorthogonal decomposition indexed by subsets
whose components are the derived categories of the intersections
$\cap_{i\in I} H_i=\mathbb{P}^{k-|I|}$, where the index $I$ runs over the~subsets
$I\subseteq \{1, \ldots, 2g+1\}$
and $H_i$ is the~$i$-th hypersurface (see~\cite[Theorem~4.9]{BLS}).
We conclude that the rank of $K_0$ of the right-hand side of~\eqref{eq:dbstack}
equals
\begin{equation}\label{eq:rg}
    r_g = \sum_{k=0}^{g-1}\sum_{t=0}^k (k+1-t){\binom{2g+1}{t}}
    =\sum_{t=0}^{g-1}{\binom{2g+1}{t}}\sum_{k=t}^{g-1}(k+1-t)
    =\sum_{t=0}^{g-1}{\binom{g+1-t}{2}}{\binom{2g+1}{t}}.
\end{equation}

Next, we use the interpretation of $\mathcal{M}$ as the moduli space of parabolic
bundles. A series of varieties $\mathcal{M}_0, \mathcal{M}_1,\ldots \mathcal{M}_{g-1}$
was constructed in~\cite{Ba} such that
$\mathcal{M}_0=\mathbb{P}^{2g-2}$,
$\mathcal{M}_1$ is a blowup of $\mathcal{M}_0$ in $(2g+1)$ points,
$\mathcal{M}_{g-1}\simeq\mathcal{M}$, and $\mathcal{M}_{i+1}$
can be obtained from $\mathcal{M}_i$ via an anti-flip:
in $\mathcal{M}_i$ one should blow up
\begin{equation*}
    n_i={\binom{2g+1}{i+1}}+{\binom{2g+1}{i-1}}+
    {\binom{2g+1}{i-3}}+\cdots
\end{equation*}
subvarieties isomorphic to $\mathbb{P}^i$,
and then blow down the exceptional divisors to subvarieties isomorphic to
$\mathbb{P}^{2g-3-i}$ in $\mathcal{M}_{i+1}$.
We use this description to inductively compute $l_g = \rk\mathcal{M}=\rk\mathcal{M}_{g-1}$.

The rank of $K_0(\mathcal{M}_0)=2g-1$. When we pass from $\mathcal{M}_i$ to $\mathcal{M}_{i+1}$,
each blow up increases the rank of $K_0$ by
\begin{equation*}
    (2g-3-i)\rk K_0(\mathbb{P}^i) = (2g-3-i)(i+1),
\end{equation*}
while each blow down decreases the rank of $K_0$ by
\begin{equation*}
    i\cdot\rk K_0(\mathbb{P}^{2g-3-i}) = i(2g-2-i).
\end{equation*}
(Formally, there is no blow down when passing from $\mathcal{M}_0$ to $\mathcal{M}_1$,
but the formula still applies.)
Since there are $n_i$ of each, we conclude that
\begin{equation*}
    \rk K_0(\mathcal{M}_{i+1})-\rk K_0(\mathcal{M}_i) = n_i(2g-3-2i).
\end{equation*}
Collecting the coefficients at ${\binom{2g+1}{i}}$, we conclude that $l_g$ is given by
the same sum as $r_g$ in~\eqref{eq:rg}.

Finally, we show that $r_g=g\cdot 4^{g-1}$. Using the binomial recurrence relation,
rewrite the right-hand side of~\eqref{eq:rg} as
\begin{equation*}
    r_g = \sum_{t=0}^{g-1} (g-t)^2\binom{2g}{t} = \frac{1}{2}\sum_{t=0}^{2g}(g-t)^2\binom{2g}{t}.
\end{equation*}

Consider a binomially distributed random variable $X\sim B(n, p)$. It is well known that
its expected value $\mathrm{E}[X]$ equals $np$. Meanwhile, its variance $\mathrm{Var}(X)=np(1-p)$.
Let us specialize to $n=2g$ and $p=\frac{1}{2}$. Then $\mathrm{E}[X]=g$, and
\begin{equation*}
    \frac{g}{2} = \mathrm{Var}(X) = \sum_{t=0}^{2g}(g-t)^2\binom{2g}{t}\left(\frac{1}{2}\right)^{2g} = 
    \left(\frac{1}{2}\right)^{2g-1}\frac{1}{2}\sum_{t=0}^{2g}(g-t)^2\binom{2g}{t} =
    \left(\frac{1}{2}\right)^{2g-1} r_g,
\end{equation*}
from which we get the desired equality $r_g = g\cdot 4^{g-1}$.

\end{proof}

\begin{remark}
    It would be very interesing to find a combinatorial proof of the identity
    \begin{equation}\label{eq:main}
        \sum_{t=0}^{g-1}{\binom{g+1-t}{2}}{\binom{2g+1}{t}} = g\cdot 4^{g-1}.
    \end{equation}
    The right-hand side has an elementary interpretation: it counts pairs of
    subsets $A, B\subset \{1, 2, \ldots, g\}$ with non-empty intersection and
    a choice of an element $x\in A\cap B$.
\end{remark}

\noindent\textbf{Acknowledgements.} The author is grateful to A.~Kuznetsov
and P.~Belmans for interesting conversations. Our original proof of~\eqref{eq:main}
was much less elegant. We would like to thank Alapan Das
for a basic probability course refresher on MathOverflow.

\printbibliography

\end{document}